\DeclareMathAlphabet{\mathpzc}{OT1}{pzc}{m}{it}
\numberwithin{equation}{section}
\def\eqnarray{\stepcounter{equation}\let\@currentlabel=\theequation
	\global\@eqnswtrue
	\tabskip\@centering\let\\=\@eqncr
	$$\halign to \displaywidth\bgroup\hfil\global\@eqcnt\z@
	$\displaystyle\tabskip\z@{##}$&\global\@eqcnt\@ne
	\hfil$\displaystyle{{}##{}}$\hfil
	&\global\@eqcnt\tw@ $\displaystyle{##}$\hfil
	\tabskip\@centering&\llap{##}\tabskip\z@\cr}
\def\endeqnarray{\@@eqncr\egroup
	\global\advance\c@equation\m@ne$$\global\@ignoretrue}
\newtheorem{theorem}{Theorem}[section]
\newtheorem{definition}[theorem]{Def{}inition}
\numberwithin{equation}{section}
\newcommand{\R}{\mathbb{R}}
\newcommand{\cU}{\mathcal{U}}
\newcommand{\cV}{\mathcal{V}}
\DeclareMathOperator{\supp}{supp}
\title{A Generic MATLAB Toolbox to Approximate PDEs Using Computational Geometry}
\date{\today}
\thanks{
	This work is partially supported by NSF grant DMS-2408877, Air Force Office of Scientific Research (AFOSR) under Award NO: FA9550-22-1-0248, and Office of Naval Research (ONR) under Award NO: N00014-24-1-2147.
}
\author{Kiefer Green and Harbir Antil}
\address{K. Green and H. Antil. The Center for Mathematics and Artificial Intelligence
	(CMAI) and Department of Mathematical Sciences, George Mason University,
	Fairfax, VA 22030, USA.}
\email{kgreen32@gmu.edu, hantil@gmu.edu}
\begin{document}	

%%%%%%%%%%%%%%%%%%%%%%%%%%%%%%%%%%%%%%%%%%%%%%%%%%%%%%
 
	\begin{abstract}	
		This article introduces a general purpose framework and software to approximate partial differential equations (PDEs). The sparsity patterns of finite element discretized operators is identified automatically using the tools from computational geometry. They may enable  experimentation with novel mesh generation techniques and could simplify the implementation of methods such as multigrid. We also implement quadrature methods following the work of Grundmann and M\"{o}ller. These methods have been overlooked in the past but are more efficient than traditional tensor product methods. The proposed framework is applied to several standard examples.
	\end{abstract}
	
	\keywords{PDEs, Finite elements, Quadrature, Elliptic equations, Stokes equations}
	\subjclass[2010]{
        35J20, %Variational methods for second-order, elliptic equations   
        35Q30, %Stokes and Navier-Stokes equations
        65M12, %Stability and convergence of numerical methods 
        65M15, %Error bounds 
        65M60, %Finite elements, Rayleigh-Ritz and Galerkin methods, finite methods
	}
	
%%%%%%%%%%%%%%To Be Commented Out %%%%%%%%%%%%%%%%%%%	
\maketitle
%\tableofcontents
%%%%%%%%%%%%%%%%%%%%%%%%%%%%%%%%%%%%%%%%%%%%%%%%%%%%%%

%%%%%%%%%%%%%%%%%%%%%%%%%%%%%%%%%%%%%%%%%%%%%%%%%%%%%%	
\FloatBarrier\section{Introduction} \label{sec:Intro}

Partial differential equations (PDEs) are needed to model many physical phenomena, and the fields of mathematics, physics and engineering encounter them regularly. Each PDE necessitates a tool to be built that can model the data and summarize the outcomes for different variables. The Finite Element Method is the go-to method for stationary or dynamic PDEs. 

Current systems are specialized and opaque. Historically, systems have been created to solve a specific PDE at hand. Their strength is that they are very good at solving the problem they are built for. The weakness is that a new system has to be created for every problem. Current systems also rely on assumptions and, therefore, lack generality. The PDE Library aims to avoid those assumptions (to some extent) and to allow solutions to be found for more general problems. 

Conventional tools for solving PDEs are currently hyper-specialized and require expert knowledge to create. Conversely, the PDE Library would create a more generalized tool kit that has broader applications and would eliminate the initial need to build a specialized tool to solve any given PDE. Furthermore, the PDE Library would ease the description of variational problems so that different models can be explored for applied problems. A successful PDE Library would need to adhere to three ideals: Generality, Flexibility, and Transparency. 

\FloatBarrier\subsection*{Ideals:}
Current systems are specialized and opaque. We will address this by developing our toolkit with the following ideals in mind.

\textbf{Generality:} Usable for a broad array of problems.
%Currently, PDEs require software that is specifically designed to solve for that PDE. 
{Existing PDE solvers are largely specialized.  
For example, an engineer could build a tool that could analyze stresses endured by a bridge, but they may need a different tool to model the heat produced by the materials used to build that same bridge. By creating a PDE Library that adheres to the Generality Ideal, we provide one tool kit that could be used to model both heat and stress.

\textbf{Flexibility:} Able to mesh easily with novel codes and ideas.
Current PDE tools have the benefit of being incredibly powerful because they are typically tailored to the specific PDE they were built for, but take time to create and refine. At times, this makes them inflexible. The PDE Library adheres to the Flexibility Ideal and allows the user to use different tools for each part of the process. Another benefit of a flexible toolkit is the ability to run in multiple workflows. An effective tool will not lock out users based on their systems. The PDE Library adheres to the Portability Ideal and is able to be taken somewhere else and be used just as easily and effectively in a wider range of scenarios.

\textbf{Transparency:} Academic code is a record of knowledge, and the knowledge recorded in this code should be transparent. An interested user with an appropriate background should be able to look at the code and understand how it works. Current PDE software is usable but it can be challenging to understand what is under the hood. Proprietary software bars access to the code, and open source software is not always concerned with the record of knowledge. 
%A better open source tool should not just be available, it must also be sensible. 
The PDE Library adheres to the Transparency Ideal and maintains a more accurate academic record.

The purpose of this tool is to make it quicker to produce solvers for novel PDEs. The PDE Library has broad applications in academia which is inherently experimental and benefits from tools that have broad uses that can be applied to a wider range of problems. Additionally, the PDE library can expand the capabilities of a non-specialized user who is not an expert in the subject. 

\FloatBarrier\subsection*{Prior Art and existing PDE Solvers}
Current PDE solvers and structural analysis software used to solve PDEs have strengths and weaknesses that the PDE Library aims to address. While some have low barriers to entry, they are not adaptable, others are more general in their use, but require subject expertise. The PDE Library focused on the characteristics of three existing solvers (Autodesk \cite{reinhart2009experimental}, MATLAB \cite{MR1787308}, and FEniCS \cite{alnaes2015fenics}) when evaluating which characteristics to apply to an academically open source tool that maintains the academic record.

\textbf{Autodesk:} The Autodesk model is one tool for one PDE. This makes it incredibly well-suited for solving that specific PDE, but fails to adhere to the Generality Ideal. Furthermore, by design it doesn't solve PDEs, rather it analyzes structures. Autodesk is also proprietary code and fails the Transparency Ideal. 

\textbf{MATLAB:} MATLAB is a better PDE solver, and it is usable by anyone who has at least an undergraduate level understanding of PDEs, but it is limited because it can only solve one specific type of PDE making it fail the Generality and Flexibility Ideals. It is more general than Autodesk, but can still be improved upon.

\textbf{FEniCS:} Fenics is an open source general PDE solver that adheres to the Ideal of Generality. It can solve any PDE the user needs so long as they are sufficiently knowledgeable about the PDE they need to solve. Fenics is built around the Unified Form Language (UFL) which requires the user to describe the problem at which point it is passed to the backend. However, this obfuscates the code and doesn't maintain an academic record, making it fail the Ideal of Transparency. Furthermore, Fenics requires UFL from start to finish, making it fail the Flexibility Ideal. 
%Finally, one doesn't have to be an expert to use Fenics, but they have to be an expert to understand the data. 

The PDE Library aims to overcome some of the issues with these prior systems by being general and open source, maintaining an academic record of how it works, and by keeping a simple data transfer model that makes it possible to use external tools for individual parts of the PDE solve. The remainder of the paper will be organized as follows: We state some preliminary notations in Section \ref{sec2}. Section \ref{sec3} deals with the description of how the tool handles the Finite Element Method and demonstrates the Flexibility Ideal. Later, in Section \ref{sec4}, we describe a novel way of handling operator sparsity which enables experimentation with novel mesh generation techniques and simplifies the implementation of methods such as multigrid. When implementing multigrid two meshes are present, the course and fine meshes. Methods that rely on determining sparsity during mesh generation cannot be used for the interpolation and prolongation operators unless the coarse grid is embedded in the fine grid because interaction of elements is no longer determined by adjacency in the mesh. Our method determines sparsity from the basis functions and allows us to handle these operators efficiently. This also helps us follow the flexibility ideal. \\
In Section \ref{sec5}, we describe how quadrature is implemented following Grundmann and M\"{o}ller \cite{GrundmannMoller} to avoid the cost of tensor product quadrature methods  in high dimension. These methods have been overlooked in the past, but with changes in floating-point arithmetic, they are now practical to use for our needs which we will show by analyzing the quadrature error of these methods. By being open about what quadrature methods we are using we uphold the Transparency Ideal, maintaining the academic record. In Section \ref{sec6}, we mention the numerical scheme and present a slate of examples that show the broad applicability of the tool and the Generality Ideal.

%%%%%%%%%%%%%%%%%%%%%%%%%%%%%%%%%%%%%%%%%%%%%%%%%%%%%%

\FloatBarrier\section{General Remarks}\label{sec2}
Let $\cU$ and $\cV$ be linear spaces. Consider the following general problem:  
Find $u\in\cU$ such that 
$$a(u,v)=\ell(v) \quad \forall~ v \in \cV \, .$$
The corresponding discrete problem reads as: Find $u_h\in\cU_h$ such that 
$$a_h(u_h,v_h)=\ell_h(v_h) \quad \forall~ v_h \in\cV_h,$$
where $a:\cU\times\cV\to\R^n$ and $a_h:\cU_h\times\cV_h\to\R^n$ are linear in their second arguments and $\ell:\cV\to\R^n$ and $\ell_h:\cV_h\to\R^n$ denote the continuous linear functionals. In particular, many PDEs in their weak-form lead to problems of the form: find $u\in\cU$ such that 
$$\sum_i\int_\Omega f_i(u,x)g_i(v,x)=\ell(v)$$
for all $v\in\cV$.

\begin{definition}[Support Preserving]
\label{def:supp_pres}
We call a function $f:\cU\times\Omega\to\R$ \textbf{support preserving} if $f(u,x)$ has the same support as $u(x)$ for any $u\in\cU$.
\end{definition}
We often find that local PDEs lead to variational problems of the form: find $u\in\cU$ such that 
$$\sum_i\int_\Omega f_i(u,x)g_i(v,x)=\ell(v)$$
for all $v\in\cV$ where all of the $f_i$ and $g_i$ are support preserving. Lastly, given an $n$-dimensional simplex $S$, which is the convex hull of the points $\{x_1,x_2,\dots,x_n\}$, and a function $f$ defined on $S$, we let 
\begin{equation}\label{permsum}
f((c_1,c_2,\dots,c_n,c_{n+1})):=\sum_{(a_1,\dots,a_{n+1})\in A(c_1,\dots,c_{n+1})}f(a_1 x_1+a_2 x_2+\dots+a_n x_n)
\end{equation}
where $A(c_1,\dots,c_{n+1})$ denotes the set of all distinct permutations of the tuple $(c_1,\dots,c_{n+1})$.
We will find this notation useful when defining quadrature methods. 

\FloatBarrier\section{Finite Element Spaces}\label{sec3}
Finite element method (FEM) will be used to discretize PDEs. We introduce a finite element and refer to \cite{BrennerScott} for details.
\begin{definition} Let
\begin{itemize}
    \item $K\subseteq\R^n $ be a bounded closed set with nonempty interior and piecewise smooth boundary (the \textbf{element domain})
    \item $\mathcal{P} $ be a finite-dimensional space of functions on $K$ (the space of \textbf{shape functions}) and
    \item $\mathcal{N}=\{N_1,N_2,\cdots,N_k\} $ be a basis for $\mathcal{P}' $, the dual space of $\mathcal{P}$ (the set of \textbf{nodal variables}).
\end{itemize}
Then $(K,\mathcal{P},\mathcal{N})$ is called a \textbf{finite element}.
\end{definition}
For computational purposes, we will work with finite element basis functions defined as an object with field $\mathtt{Support:List\langle Simplex\rangle}$ which is the set of simplices which make up the support of the function and method
$$\mathtt{Evaluate:List\langle Point\rangle\times List\langle Vector \rangle\to List\langle Derivatives\rangle}$$ 
which computes the requested derivatives of the basis function at the given points. At this point we notice that we are restricting ourselves to triangular meshes. This is reasonably general as any polytope can be triangulated, for example see Figure~\ref{fig:Floor}. Doing things this way fulfills the ideal of flexibility by simplifying the mode of passing mesh and basis information around and allows for more experimentation with new methods.

\begin{figure}[hbt!]
\caption{A conforming triangulation of the floor plan of the fourth floor of Exploratory Hall at George Mason University, Fairfax, Virginia.}
\centering
\includegraphics[width=0.8\textwidth]{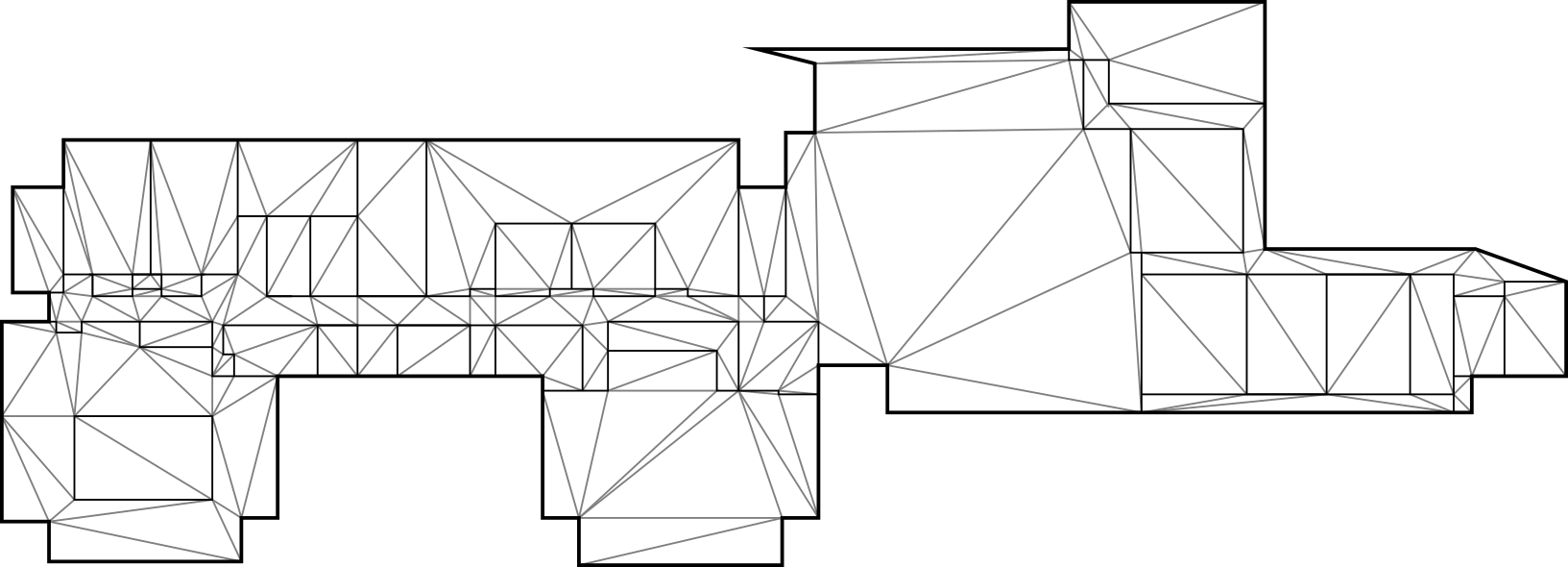}
\label{fig:Floor}
\end{figure}

\FloatBarrier\section{Geometry}\label{sec4}
We need to be able to handle an operator of type
$$a(u,v)=\int_\Omega f(u,x)g(v,x)dx$$
but if $f$ and $g$ are support preserving (see Definition~\ref{def:supp_pres}), as they frequently are, we can apply the following theorem.
\begin{theorem}
If $a(u,v)=\int_\Omega f(u,x)g(v,x)dx$ where $f$ and $g$ are Support-Preserving. Then,$$a(u,v)=\int_{S_{u}\cap S_{v}} f(u,x)g(v,x)dx$$
where $\supp{u}\subset S_u$ and $\supp{v}\subset S_v$.
\end{theorem}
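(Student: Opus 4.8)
The plan is to show that the integrand $f(u,x)g(v,x)$ vanishes everywhere outside the intersection of the two supports, so that the domain of integration can be freely shrunk from $\Omega$ down to $\supp u\cap\supp v$ and then enlarged back up to the pair of supersets $S_u\cap S_v$ without changing the value of the integral. No regularity beyond the support-preserving hypothesis is needed: the only analytic ingredient is that the integral of a function over a set on which it is identically zero contributes nothing. First I would unwind Definition~\ref{def:supp_pres}. Since $f$ is support preserving, for each fixed $u\in\cU$ the function $x\mapsto f(u,x)$ has support exactly $\supp u$, so $f(u,x)=0$ for every $x\notin\supp u$; likewise $g(v,x)=0$ for every $x\notin\supp v$. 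Consequently, whenever $x\notin\supp u$ or $x\notin\supp v$ at least one factor of $f(u,x)g(v,x)$ vanishes, hence so does the product, and therefore the integrand is identically zero on $\Omega\setminus(\supp u\cap\supp v)$.

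Next I would split the domain. Writing $\Omega$ as the disjoint union of $\supp u\cap\supp v$ and its complement in $\Omega$, additivity of the integral together with the previous observation gives
$$a(u,v)=\int_\Omega f(u,x)g(v,x)\,dx=\int_{\supp u\cap\supp v} f(u,x)g(v,x)\,dx,$$
since the piece over the complement is the integral of a function that is zero there. The same reasoning then runs in the opposite direction: because $\supp u\subset S_u$ and $\supp v\subset S_v$ force $\supp u\cap\supp v\subset S_u\cap S_v$, and because the integrand still vanishes on the enlarged region $(S_u\cap S_v)\setminus(\supp u\cap\supp v)$, replacing the domain $\supp u\cap\supp v$ by $S_u\cap S_v$ leaves the integral unchanged. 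Chaining the two identities yields $a(u,v)=\int_{S_u\cap S_v} f(u,x)g(v,x)\,dx$, as claimed.

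I do not expect a genuine obstacle here; the result is essentially a one-line statement about the support of a product, phrased in the support-preserving language. The only point that needs care is the set-theoretic bookkeeping — using the containment $\supp u\cap\supp v\subset S_u\cap S_v$ in the correct direction and confirming that the ``extra'' region over which we integrate carries no mass — together with the tacit measurability of the sets involved (automatic in the intended application, where $S_u$ and $S_v$ are finite unions of simplices and the supports are closed). If one wished to be fully careful about boundary effects, one could note that $\supp u$ is by definition the closure of $\{u\neq 0\}$, so the integrand can be nonzero only on a subset of $\supp u\cap\supp v$, which is precisely what the decomposition above requires.
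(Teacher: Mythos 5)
Your proof is correct and follows essentially the same argument as the paper: decompose the domain of integration and observe that the support-preserving hypothesis forces the integrand $f(u,x)g(v,x)$ to vanish off the relevant intersection. The only cosmetic difference is that the paper splits $\Omega$ directly into $S_u\cap S_v$ and its complement in a single step, whereas you pass through the intermediate set $\supp u\cap\supp v$ and then enlarge back to $S_u\cap S_v$; both reductions rest on the identical vanishing observation.
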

\begin{proof}
The proof follows from a simple argument:
\begin{align*}
    a(u,v)&=\int_\Omega f(u,x)g(v,x)dx\\
    &=\int_{\Omega\setminus(S_{u}\cap S_{v})} f(u,x)g(v,x)dx+\int_{S_{u}\cap S_{v}} f(u,x)g(v,x)dx\\
    &=\int_{\Omega\setminus(S_{u}\cap S_{v})} 0 dx+\int_{S_{u}\cap S_{v}} f(u,x)g(v,x)dx\\
    &=\int_{S_{u}\cap S_{v}} f(u,x)g(v,x)dx
\end{align*}
which completes the proof.
\end{proof}

In particular, we can take $S_u$ and $S_v$ as the convex hulls of the supports of $u$ and $v$ which allows us to consider the intersection of convex polytopes in order to determine pairs $u$ and $v$  such that $a(u,v)=0$.

%In the following picture we see three cases of convex figures intersecting. On the left are two that positively intersect, and so have to be handled by the system in the middle are two shapes that do not intersect and we can geometrically determine that they do not have to be considered. And, finally, on the right are two that geometrically intersect but do not contribute to the finite element solution. 

%\todo{What is the purpose of this figure, explain this in the text and add reference to the figure}
%\begin{figure}[hbt!]
%\caption{Three manners of intersection for convex sets. The first is a collision that cannot be ignored. The second is what the collision test allows us to ignore. The third can be ignored but is not captured by the collision test.}
%\centering
%\includegraphics[width=\textwidth]{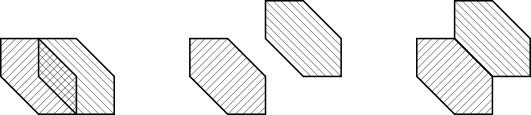}
%\label{fig:Intersect}
%\end{figure}

The problem of determining whether convex figures intersect is well handled in computational geometry and our approach is based on the separating axis theorem. We will provide the proof of this because it clarifies how we will use it. 

\begin{theorem}
    Given any two compact and convex sets $U$ and $V$, which are subsets of $\R^n$, then $U$ and $V$ are disjoint if and only if there is a vector $\Vec{v}$ called a separating axis such that the scalar projections $\pi_{\Vec{v}}(U)$ and  $\pi_{\Vec{v}}(V)$ are disjoint intervals. 
\end{theorem}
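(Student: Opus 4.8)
The plan is to prove the two implications separately, handling the straightforward direction first and then the substantive one via a minimal-distance argument. Before either direction, I would record the structural observation that makes the statement well posed: for a fixed $\Vec{v}$, the scalar projection $\pi_{\Vec{v}}(U)=\{\langle u,\Vec{v}\rangle : u\in U\}$ is the image of the compact convex set $U$ under the continuous linear functional $u\mapsto\langle u,\Vec{v}\rangle$, hence is a compact convex subset of $\R$, i.e.\ a closed bounded interval (and likewise for $V$). This justifies speaking of the projections as intervals.

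For the easy direction ($\Leftarrow$), suppose such a separating axis $\Vec{v}$ exists. If $U$ and $V$ were not disjoint, pick $x\in U\cap V$; then $\langle x,\Vec{v}\rangle$ lies in both $\pi_{\Vec{v}}(U)$ and $\pi_{\Vec{v}}(V)$, contradicting their disjointness. Hence $U\cap V=\emptyset$.

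For the hard direction ($\Rightarrow$), I would pass to the Minkowski difference $W:=U-V=\{u-v : u\in U,\ v\in V\}$. Since $U$ and $V$ are compact and convex, so is $W$ (it is the image of the compact convex set $U\times V$ under a continuous linear map), and crucially $U\cap V=\emptyset$ is equivalent to $0\notin W$. Because $W$ is compact and nonempty, $w\mapsto\|w\|$ attains a minimum at some $w^\ast\in W$, and $w^\ast\neq 0$ since $0\notin W$. The key step, and the main obstacle, is the projection lemma: $\Vec{v}:=w^\ast$ satisfies $\langle w,\Vec{v}\rangle\geq\|\Vec{v}\|^2$ for every $w\in W$. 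To see this I would use convexity: for any $w\in W$ and $t\in[0,1]$ the point $w^\ast+t(w-w^\ast)$ lies in $W$, so $t\mapsto\|w^\ast+t(w-w^\ast)\|^2$ is minimized at $t=0$; evaluating the one-sided derivative at $t=0$ gives $\langle w^\ast,w-w^\ast\rangle\geq 0$, which rearranges to the claim.

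Finally I would translate this back to $U$ and $V$. Writing $w=u-v$ yields $\langle u,\Vec{v}\rangle-\langle v,\Vec{v}\rangle\geq\|\Vec{v}\|^2>0$ for all $u\in U$, $v\in V$, so
\[
\min_{u\in U}\langle u,\Vec{v}\rangle \;\geq\; \max_{v\in V}\langle v,\Vec{v}\rangle+\|\Vec{v}\|^2 \;>\; \max_{v\in V}\langle v,\Vec{v}\rangle .
\]
Since these are exactly the left endpoint of $\pi_{\Vec{v}}(U)$ and the right endpoint of $\pi_{\Vec{v}}(V)$, the two intervals are disjoint, exhibiting $\Vec{v}$ as the required separating axis. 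I expect the projection lemma to be the crux: it is the only place where both convexity and compactness are genuinely used, while everything else is bookkeeping (the interval observation), a one-line contradiction (the easy direction), or algebraic rearrangement.
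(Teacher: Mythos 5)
Your proof is correct, and it takes a recognizably different route from the paper's, even though both hinge on the same object: the paper sets $\Vec{v}=p_U-p_V$ where $(p_U,p_V)$ minimizes $\|p_U-p_V\|^2$ over $U\times V$, and your $w^\ast$ (the minimal-norm point of $W=U-V$) is exactly this same vector. The difference is in how separation is extracted. The paper argues by contradiction: if some $a\in U$ projects into $\pi_{\Vec{v}}(V)$, then the point $a'$ on the segment $[p_U,a]$ closest to $p_V$ lies in $U$ by convexity and satisfies $\|a'-p_V\|^2<\|p_U-p_V\|^2$, contradicting minimality; the strict inequality is asserted with an appeal to a figure rather than proved. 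Your argument replaces that geometric step with the variational (projection) lemma $\langle w,w^\ast\rangle\geq\|w^\ast\|^2$ for all $w\in W$, obtained from the one-sided derivative of $t\mapsto\|w^\ast+t(w-w^\ast)\|^2$ at $t=0$ --- which is precisely the calculation needed to justify the paper's unproved strict-decrease claim. What your route buys: it is direct rather than by contradiction, it is quantitative (the intervals are separated by a gap of at least $\|w^\ast\|^2$, up to the normalization of the scalar projection), and it cleanly isolates where compactness and convexity enter; you also record explicitly why the projections are intervals, which the paper leaves implicit. What the paper's route buys: it works with $U$ and $V$ directly rather than with the Minkowski difference, so it mirrors verbatim the quadratic program $\min_{p_U\in U,\,p_V\in V}\|p_U-p_V\|^2$ that Algorithm~1 actually solves, making the proof double as documentation of the intersection test.
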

\begin{proof}
First direction: Suppose that there is a vector $\Vec{v}$ such that the scalar projections $\pi_{\Vec{v}}(U)=[U_m,U_M]$ and $\pi_{\Vec{v}}(V)=[V_m,V_M]$ are disjoint intervals and suppose that $x\in U\cap V$. Now, $\pi_{\Vec{v}}(x)\in [U_m,U_M]\cap [V_m,V_M]$, which is a contradiction as the intervals are disjoint, therefore $U\cap V=\varnothing$. The left panel of figure \ref{fig:Separate} shows what is meant by a separating axis.

%\todo{Add reference to this figure in the text and describe what it says!}
%\begin{figure}[hbt!]
%\caption{Sketch of first direction}
%\centering
%\includegraphics[width=.5\textwidth]{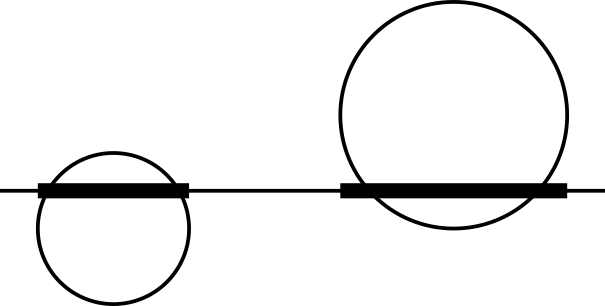}
%\end{figure}

Second direction: Suppose that $U$ and $V$ are disjoint and let $\Vec{v}=p_U-p_V$ where $(p_U,p_V)\in U\times V$ solves the minimization problem
\begin{align*}
    \min_{(p_U,p_V)\in U\times V}\|p_U-p_V\|^2 \, .
\end{align*}
This minimum exists, since $U\times V$ is compact, and is positive, since $U$ and $V$ are disjoint. Now, suppose $\pi_{\Vec{v}}(U)\cap\pi_{\Vec{v}}(V)\neq\varnothing$, that is there is a point $a\in U$ such that $\pi_{\Vec{v}}(a)\in\pi_{\Vec{v}}(V)$. Now, find on the segment connecting $p_U$ to $a$ the point $a'$ which is closest to $p_V$. This point must be in $U$ since $U$ is convex but this is a contradiction since $\|a'-p_V\|^2<\|p_U-p_V\|^2$. Thus the projections are disjoint. The right panel of figure \ref{fig:Separate} shows that the axis connecting closest points separates.
 	\begin{figure}[hbt!]		
		\centering
		\includegraphics[width=0.45\textwidth]{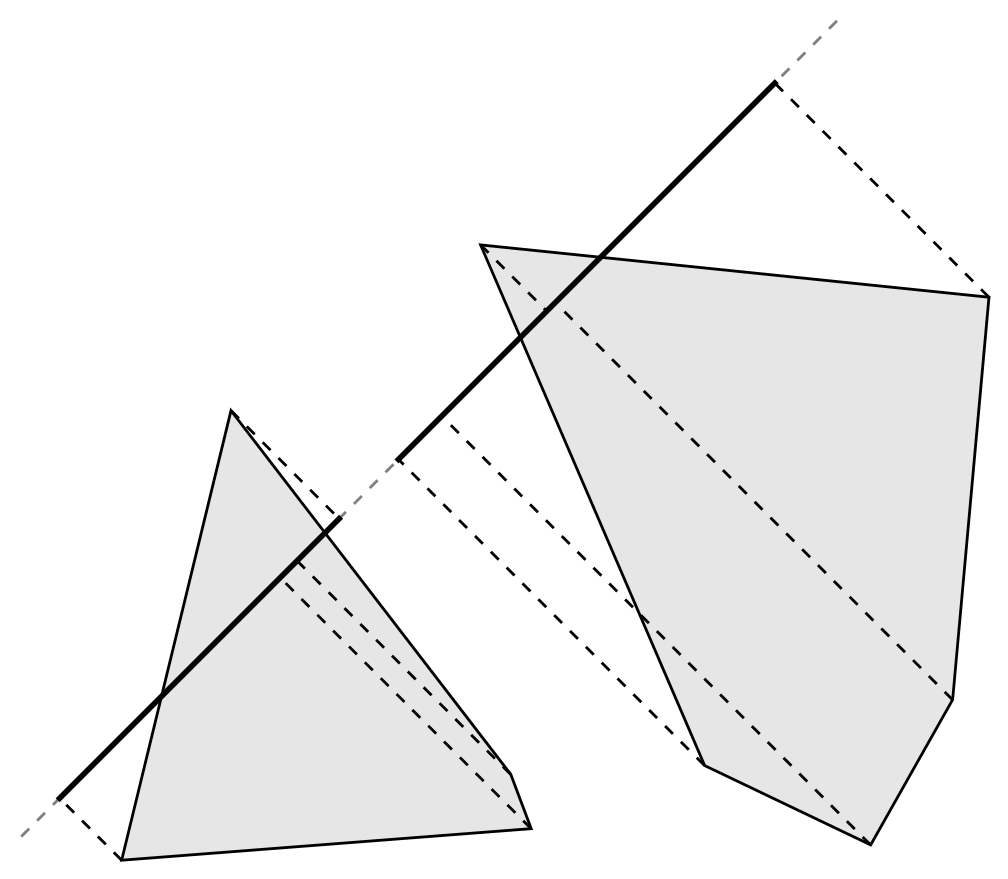}
        \includegraphics[width=0.45\textwidth]{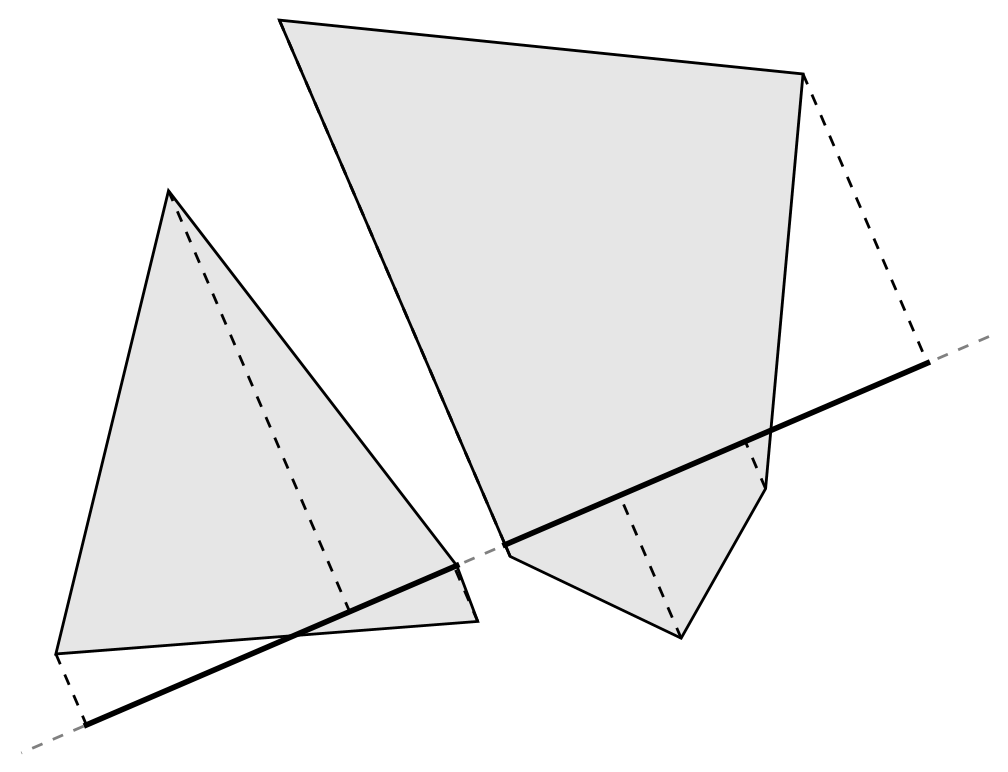}		
        \caption{\label{fig:Proof}
        The left panel shows that if there is a separating axis, the dashed gray line then convex sets are disjoint and the right panel shows that if two convex sets are disjoint then the axis connecting their closest points separates.}
        \label{fig:Separate}
	\end{figure}
\FloatBarrier
\end{proof}

Looking at the proof above, our approach for determining whether two figures do not intersect, requires us to find a separating axis (see Figure \ref{fig:Separate}). This is carried out by looking for the closest pair of points which we do by solving the quadratic program 
$$\min\limits_{p_U\in U,p_V\in V}\|p_U-p_V\|^2$$
the procedure is described in Algorithm~\ref{alg:ConvexIntersect}.

\begin{algorithm}
\caption{Determine whether convex polytopes intersect}\label{alg:ConvexIntersect}
\begin{algorithmic}
\Procedure{ConvexIntersection}{ConvexPolytope U,ConvexPolytope V}
\If{axis aligned bounding boxes of $U$ and $V$ do not intersect}
\State\Return False 
\EndIf
\State $\{p_U,p_V\}\gets\min\limits_{p_U\in U,p_V\in V}\|p_U-p_V\|^2$
\State$\vec{d}\gets p_U-p_V$
\If{$\pi_{\vec{d}(U)}\cap\pi_{\vec{d}(V)}=\varnothing$}
\State\Return False
\Else
\State\Return True
\EndIf
%\State\Return$\pi_{\vec{d}(U)}\cap\pi_{\vec{d}(V)}\neq\varnothing$
\EndProcedure
\end{algorithmic}
\end{algorithm}

Using these ideas from computational geometry allows us to automate determining sparsity patterns for operators. This means that we do not have to compute sparsity patterns during mesh generation. This assists in the implementation of methods such as multigrid where coarse and fine meshes may be generated separately, and therefore the sparsity of the interpolation and prolongation operators cannot be computed at mesh generation. These approaches also enable easier exploration of novel mesh generation techniques and grants the flexibility to mix and match different tools for different parts of the problem.

\FloatBarrier\section{Quadrature}\label{sec5}
In order to implement the operators $a$ and $\ell$, we will need quadrature methods. Since the supports of our basis functions are unions of simplices, we implement quadrature over simplices. In particular, we use the $n$-dimensional degree $d$ rules of Grundemann and M\"{o}ller.
\begin{theorem}\cite[Theorem 4]{GrundmannMoller}
    Let $n \in \mathbb{N}$ be the dimension of the simplex being integrated over, $S_n$ be the standard simplex of dimension $n$, $s\in\{0,1,2,3,\cdots\}$, and $d=2 s+1$ be the intended degree of quadrature. Then the quadrature method
    \begin{align*}
        Q(p)=\sum_{i=0}^s(-1)^i 2^{-2s}\frac{(d+n-2i)^d}{i!(d+n-i)!}&\sum_{|\mathbf{\beta}|=s-i,\beta_0\geq\cdots\geq\beta_n}p\left(\left(\frac{2\beta_0+1}{d+n-2i},\cdots,\frac{2\beta_n+1}{d+n-2i}\right)\right)\\=&\int_{S_n}p(\mathbf{x})d\mathbf{x}
    \end{align*}
is exact for polynomials of degree $d$ (cf. \eqref{permsum} for the bracket notation).
\end{theorem}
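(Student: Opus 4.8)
The plan is to reduce the claim to a single family of scalar identities and then settle those by an extrapolation argument. Both $Q$ and the functional $p\mapsto\int_{S_n}p\,d\mathbf{x}$ are linear on the space of polynomials of degree at most $d$, so it suffices to verify $Q(p)=\int_{S_n}p\,d\mathbf{x}$ on a spanning set. I would phrase everything in the barycentric coordinates $(\lambda_0,\dots,\lambda_n)$ with $\lambda_0+\cdots+\lambda_n=1$ used implicitly by \eqref{permsum}, so that the monomials $\lambda_0^{a_0}\cdots\lambda_n^{a_n}$ with $|a|\le d$ span the space and the exact integral of each is the Dirichlet value
$$\int_{S_n}\lambda_0^{a_0}\cdots\lambda_n^{a_n}\,d\mathbf{x}=\frac{a_0!\cdots a_n!}{(n+|a|)!}.$$

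Next I would exploit symmetry to cut down the bookkeeping. The integral is invariant under the symmetric group $S_{n+1}$ permuting the coordinates $\lambda_j$, and so is $Q$: this invariance is exactly what the bracket notation of \eqref{permsum} encodes, since each node is replaced by the sum over all distinct permutations of its coordinates. Consequently it is enough to test exactness on the symmetrized monomials $\mu_a:=\sum_{\sigma\in S_{n+1}}\lambda_{\sigma(0)}^{a_0}\cdots\lambda_{\sigma(n)}^{a_n}$, indexed by partitions $a$ with $|a|\le d$. This replaces the multi-index $a$ by the single degree $k=|a|$ as the effective parameter at each step.

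The third step is to make the two sides explicit and isolate the identity that must hold. Reindexing the outer sum by $r:=s-i$, the inner nodes at level $i$ all share the denominator $D=d+n-2i=2r+n+1$, and a homogeneous symmetrized monomial of degree $k$ evaluated there equals $D^{-k}$ times a sum of products $\prod_j(2\beta_j+1)^{a_j}$ over the lattice points $|\beta|=r$; that lattice sum is a polynomial in $D$. Gathering the level weights $(-1)^i2^{-2s}D^{\,d}/(i!\,(d+n-i)!)$, the statement $Q(\mu_a)=\int_{S_n}\mu_a$ becomes, for each degree $k\le d$, a scalar identity of the form
$$\sum_{i=0}^{s}\frac{(-1)^i}{i!\,(d+n-i)!}\,R_k(d+n-2i)=\text{(the Dirichlet value)},$$
where $R_k$ is an explicit polynomial assembled from $D^{\,d-k}$ and the lattice sum.

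The main obstacle is proving this alternating-sum identity, and this is where I expect the real work to be. Writing the weight as $(-1)^i/(i!\,(d+n-i)!)=(-1)^i\binom{d+n}{i}/(d+n)!$ exposes it as a \emph{truncated} alternating binomial kernel, i.e.\ a partial finite-difference/extrapolation operator; I would analyze the sum by encoding it as a coefficient extraction from a generating function and checking that the combination cancels every spurious term while reproducing the Dirichlet value. The reason the count closes exactly at $d=2s+1$ with only $s+1$ levels is that the underlying symmetric lattice rules are centrally symmetric, so their error expansions should contain only even-order terms; each of the $s+1$ terms then removes two orders of error, in the same spirit as Romberg extrapolation. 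Verifying this even-order structure and the resulting cancellation is the crux, with the degenerate level $r=0$ (a single node at the centroid, $D=n+1$) providing the base of the construction.
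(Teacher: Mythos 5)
The paper itself contains no proof of this statement: it is quoted verbatim from Grundmann and M\"{o}ller \cite[Theorem 4]{GrundmannMoller}, so the benchmark is their original combinatorial proof. Your opening reductions are all sound and in fact parallel theirs: linearity reduces exactness to monomials; the bracket notation \eqref{permsum} makes the rule invariant under permutations of barycentric coordinates, so symmetrized monomials suffice; the Dirichlet formula $\int_{S_n}\lambda_0^{a_0}\cdots\lambda_n^{a_n}\,d\mathbf{x}=\prod_j a_j!/(n+|a|)!$ is the correct target; and your reindexing $D=d+n-2i=2r+n+1$ and the rewriting $(-1)^i/(i!\,(d+n-i)!)=(-1)^i\binom{d+n}{i}/(d+n)!$ correctly expose the truncated alternating kernel that Grundmann--M\"{o}ller analyze with generating functions and combinatorial lemmas.

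The genuine gap is in your final step, which is both the crux and the only place you offer a mechanism. First, the decisive alternating-sum identity is announced but never proved; everything before it is routine bookkeeping. Second, the mechanism you propose for why $s+1$ levels close the count at $d=2s+1$ --- ``the underlying symmetric lattice rules are centrally symmetric, so their error expansions contain only even-order terms,'' in the spirit of Romberg extrapolation --- fails for the simplex. $S_n$ is not centrally symmetric, and the node sets $\{(2\beta_0+1)/D,\dots,(2\beta_n+1)/D\}$ are invariant only under the symmetric group permuting barycentric coordinates; unlike central symmetry on a cube or ball, permutation invariance does not annihilate odd-degree errors (the symmetrized cubic $\sum_j\lambda_j^3$ imposes a nontrivial moment condition on every permutation-invariant rule). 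So there is no even-only error expansion to extrapolate against, and the gain of the \emph{odd} degree $2s+1$ from only $s+1$ levels is precisely the nontrivial content of Theorem 4, not a symmetry freebie. Relatedly, your generating-function plan cannot lean on the standard fact that full $(d+n)$-th finite differences kill polynomials of degree $<d+n$: the kernel $\sum_{i=0}^{s}(-1)^i\binom{d+n}{i}(\cdot)$ is truncated at $i=s\ll d+n$, and handling that truncation against the lattice sums $\sum_{|\beta|=r}\prod_j(2\beta_j+1)^{a_j}$ is exactly what Grundmann--M\"{o}ller's combinatorial lemmas accomplish. As written, the proposal is a plausible outline of their strategy with the hard step missing and its heuristic justification incorrect.
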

The degree $d$, dimension $n$ Grundmann and M\"{o}ller method requires $\binom{\frac{2n+d+1}{2}}{\frac{d-1}{2}}$ function evaluations. This is exponential in the degree, which is usually kept small, and polynomial in the dimension, which may not be small. Compare this to tensor product methods whose costs are $(\frac{d+1}{2})^n$ function evaluations which is exponential in the dimension and polynomial in the degree. This quickly leads to tensor product methods being infeasible. This is summarized in Table \ref{tab:QuadCosts}

\begin{table}
    \centering
    \begin{tabular}{|c|c|c|c|} \hline 
         &  $d=3$&  $d=5$&  $d=7$\\ \hline 
         $n=1$&  $3$&  $6$&  $10$\\ \hline 
         $n=2$&  $4$&  $10$&  $20$\\ \hline 
         $n=3$&  $5$&  $15$&  $35$\\ \hline 
         $n=4$&  $6$&  $21$&  $56$\\ \hline 
         $n=5$&  $7$&  $28$&  $84$\\ \hline 
 $n=6$& $8$& $36$&$120$\\ \hline 
 $n=7$& $9$& $45$&$165$\\ \hline 
 $n=8$& $10$& $55$&$220$\\ \hline 
 $n=9$& $11$& $66$&$286$\\ \hline 
 $n=10$& $12$& $78$&$364$\\\hline \hline
 & $n+2$& $\frac{n^2+5n+6}{2}$&$\frac{n^3+9n^2+26n+24}{6}$\\\hline
    \end{tabular}
        \begin{tabular}{|c|c|c|c|} \hline 
         &  $d=3$&  $d=5$&  $d=7$\\ \hline 
         $n=1$&  $2$&  $3$&  $4$\\ \hline 
         $n=2$&  $4$&  $9$&  $16$\\ \hline 
         $n=3$&  $8$&  $27$&  $64$\\ \hline 
         $n=4$&  $16$&  $81$&  $256$\\ \hline 
         $n=5$&  $32$&  $243$&  $1024$\\ \hline 
 $n=6$& $64$& $729$&$4096$\\ \hline 
 $n=7$& $128$& $2187$&$16384$\\ \hline 
 $n=8$& $256$& $6561$&$65536$\\ \hline 
 $n=9$& $512$& $19683$&$262144$\\ \hline 
 $n=10$& $1024$& $59049$&$1048576$\\\hline \hline
 & $2^n$& $3^n$&$4^n$\\\hline
    \end{tabular}
    \caption{The left panel shows the number of function evaluations required for the Grundmann-M\"{o}ller quadrature method of degree $d$ in $n$ dimensions and the right panel shows the number of function evaluations for tensor product quadrature methods of degree $d$ in $n$ dimensions. The bottom row of each table shows the cost of each method for a fixed degree and a general dimension. We can see that tensor product requires far more function evaluations, rapidly becoming infeasible even for modest dimensions where than Grundmann-M\"{o}ller remains feasible even for high dimension problems.}
    \label{tab:QuadCosts}
\end{table}
Since the Grundmann and M\"{o}ller methods do not all have positive weights we need to be more aware of our quadrature error. In order to analyze the error of our quadrature methods, we state the following well-known result.
\begin{theorem}[Bramble-Hilbert \cite{BrennerScott}]
    Let $\Omega$ be convex and let $u\in W_p^m(\Omega)$. Then there exists a polynomial $u^*$ of degree $m$ such that
    $$\|u-u^*\|_{W_p^k(\Omega)}\leq C_{m,n,\gamma} d^{m-k} \|u\|_{W_p^m(\Omega)}$$
    where $p\geq 1$, $k$ is between $0$ and $m$, $d$ is the diameter of the smallest ball containing $\Omega$, $\gamma=\frac{d}{\rho_{max}}$  where $\rho_{max}$ is the radius of the largest ball contained in $\Omega$, and $n$ is the dimension of the space.
\end{theorem}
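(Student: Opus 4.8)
\section*{Proof proposal}

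The plan is to take $u^*$ to be the \emph{averaged Taylor polynomial} of $u$ and to reduce the estimate to a bound on a Riesz-potential-type integral operator acting on the top-order derivatives of $u$. Because $\Omega$ is convex it is star-shaped with respect to every one of its points, and in particular with respect to the inscribed ball $B$ of radius $\rho_{max}$. I would fix a cutoff $\phi\in C_c^\infty(B)$ with $\int_B\phi\,dy=1$ and, for smooth $u$, set
$$u^*(x)=\int_B\Bigg(\sum_{|\alpha|<m}\frac{1}{\alpha!}D^\alpha u(y)\,(x-y)^\alpha\Bigg)\phi(y)\,dy .$$
This is a polynomial of degree at most $m-1$ (in particular of degree at most $m$, as the statement requires) in the variable $x$. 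By the density of smooth functions in $W_p^m(\Omega)$ it suffices to prove the bound for smooth $u$ and then pass to the limit, so I will work with smooth $u$ throughout.

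The first step is to derive an integral representation of the remainder $u-u^*$. Writing Taylor's theorem with integral remainder for $u(x)-\sum_{|\alpha|<m}\tfrac{1}{\alpha!}D^\alpha u(y)(x-y)^\alpha$, inserting it under the average against $\phi$, and interchanging the order of integration (justified because $\Omega$ is convex, so the segment joining $y\in B$ to $x\in\Omega$ stays in $\Omega$), one is led to a representation of the form
$$u(x)-u^*(x)=\sum_{|\alpha|=m}\int_\Omega k_\alpha(x,z)\,D^\alpha u(z)\,dz ,$$
where each kernel $k_\alpha$ is assembled from $\phi$, its derivatives, and a factor homogeneous of degree $m-n$ in $x-z$; schematically $|k_\alpha(x,z)|\lesssim|x-z|^{m-n}$ up to constants depending on $\phi$. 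Obtaining this representation cleanly is the technical heart of the argument.

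The second step is to estimate the derivatives of this operator. Differentiating the representation $k$ times in $x$ produces kernels bounded by $|x-z|^{m-k-n}$, and for $k<m$ this singularity is weakly integrable. Since $\Omega$ sits inside a ball of diameter $d$, Young's inequality for convolution against a weakly singular kernel gives, for each multi-index $\beta$ with $|\beta|=k$,
$$\|D^\beta(u-u^*)\|_{L^p(\Omega)}\le C\,\big\|\,|x|^{m-k-n}\,\big\|_{L^1(\{|x|\le d\})}\sum_{|\alpha|=m}\|D^\alpha u\|_{L^p(\Omega)} .$$
The $L^1$ norm of the kernel over a ball of radius $d$ scales like $d^{(m-k-n)+n}=d^{m-k}$, which furnishes exactly the advertised power of the diameter; summing over $|\beta|\le k$ and bounding the right-hand seminorm by $\|u\|_{W_p^m(\Omega)}$ yields the claimed inequality. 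The borderline case $k=m$ is handled directly, since there the relevant kernel is no longer singular.

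The remaining task is to track the dependence of the constant on the shape parameter $\gamma=d/\rho_{max}$. A natural cutoff satisfies $\|D^\delta\phi\|_\infty\lesssim\rho_{max}^{-n-|\delta|}$, and these factors enter $k_\alpha$ alongside the powers of $d$ coming from the support of $\phi$; collecting them yields a constant of the form $C_{m,n}\,\gamma^{N}$ for some $N=N(m,n)$, i.e.\ a constant $C_{m,n,\gamma}$ that degenerates only as $\Omega$ becomes arbitrarily thin. The hard part will be precisely this bookkeeping: deriving the kernel representation and then disentangling the two geometric scales, so that all of the $\rho_{max}$-dependence collapses into $\gamma$ while the $d$-dependence collapses into the single factor $d^{m-k}$. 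Once the representation is in hand, the singular-integral estimate itself is routine.
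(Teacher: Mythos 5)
The paper itself does not prove this statement; it quotes the Bramble--Hilbert lemma as a known result from \cite{BrennerScott}, so the only meaningful comparison is with the standard proof in that reference. Your sketch is essentially that proof: the averaged Taylor polynomial over the inscribed ball, the Riesz-potential representation of the remainder with kernel dominated by $|x-z|^{m-n}$, a weakly singular convolution estimate producing the factor $d^{m-k}$, and the bookkeeping that collapses the $\rho_{max}$-dependence into the chunkiness parameter $\gamma$. (Note that \cite{BrennerScott} states the sharper bound with the seminorm $|u|_{W_p^m(\Omega)}$ on the right; your version with the full norm follows a fortiori, and your degree-$(m-1)$ polynomial is consistent with the theorem's ``degree $m$.'')

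Two technical points in your second step need repair. First, your claim that at $k=m$ ``the relevant kernel is no longer singular'' is backwards: formally the kernel there would scale like $|x-z|^{-n}$, which is not locally integrable, so the Young-type estimate is unavailable at the borderline; the case $k=m$ is instead trivial for a different reason, namely that your $u^*$ has degree $m-1$, so $D^\beta u^*=0$ for $|\beta|=m$ and the $m$-th order seminorm of $u-u^*$ equals that of $u$. Second, differentiating the remainder representation $k$ times in $x$ is not as innocuous as ``kernels bounded by $|x-z|^{m-k-n}$'' suggests: the kernel $k_\alpha(x,z)$ is not a convolution kernel, since its $x$-dependence also enters through the cone joining $x$ to the averaging ball, and differentiating it directly under the integral is delicate. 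The standard device in \cite{BrennerScott} avoids this entirely via the commutation identity $D^\beta Q^m u = Q^{m-k}\bigl(D^\beta u\bigr)$ for $|\beta|=k$, which reduces the $W_p^k$ estimate to the $L^p$ case with $m$ replaced by $m-k$ and $u$ by $D^\beta u$, so that only the undifferentiated kernel bound is ever needed. With these two repairs your outline is the complete standard argument.
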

In particular, taking $p=\infty$ and $k=0$, we have the bound
    $$\|u-u^*\|_{L_\infty(\Omega)}\leq C_{m,n,\gamma} d^{m} \|u\|_{W_\infty^m(\Omega)}$$
and so we have the error-bound.

%%Statement

\begin{theorem}
 In the setting of Bramble-Hilbert, given a quadrature method $Q(f)$ which on the standard simplex $S_n$ is $\sum_k w_k f(x_k)$, and whose value on any other simplex is found by transferring to the standard simplex, we have 
 $$\left|Q(u)-\int_\Omega u \right|\leq \left(\frac{W}{\mu(S_n)}+1\right)\mu(\Omega)C_{m,n,\gamma}d^m \|u\|_{W_\infty^m(\Omega)}$$
where $W=\sum_k |w_k|$.
\end{theorem}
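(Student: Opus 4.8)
The plan is to run the classical Bramble--Hilbert argument for quadrature error: replace $u$ by its best polynomial approximant, use exactness of the rule on polynomials to kill the error on that approximant, and then estimate everything in terms of the residual $u-u^*$. First I would apply the Bramble--Hilbert theorem (in the displayed $p=\infty$, $k=0$ form) to obtain a polynomial $u^*$ of degree $m$ with
$$\|u-u^*\|_{L_\infty(\Omega)}\leq C_{m,n,\gamma}d^m\|u\|_{W_\infty^m(\Omega)}.$$
Since the Grundmann--M\"{o}ller rule $Q$ is exact on polynomials of degree $m$ (this is the sense in which the quadrature degree must be matched to the Bramble--Hilbert degree), we have $Q(u^*)=\int_\Omega u^*$, so that by linearity
$$Q(u)-\int_\Omega u=Q(u-u^*)-\int_\Omega(u-u^*).$$
This reduces the whole problem to bounding the two terms on the right in terms of $\|u-u^*\|_{L_\infty(\Omega)}$.

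The integral term is immediate: $\left|\int_\Omega(u-u^*)\right|\leq\mu(\Omega)\,\|u-u^*\|_{L_\infty(\Omega)}$. For the quadrature term I need to track how the weights transform under the affine map $\phi\colon S_n\to\Omega$ used to define $Q$ on $\Omega$. Because $\phi$ has constant Jacobian equal to the volume ratio $\mu(\Omega)/\mu(S_n)$, the transferred weights are $\tilde w_k=\frac{\mu(\Omega)}{\mu(S_n)}w_k$ and the transferred nodes $\tilde x_k=\phi(x_k)$ lie in $\Omega$. Hence
$$|Q(u-u^*)|=\left|\sum_k\tilde w_k\,(u-u^*)(\tilde x_k)\right|\leq\sum_k|\tilde w_k|\,\|u-u^*\|_{L_\infty(\Omega)}=\frac{\mu(\Omega)}{\mu(S_n)}\,W\,\|u-u^*\|_{L_\infty(\Omega)},$$
where $W=\sum_k|w_k|$. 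The absolute values are exactly why the unsigned quantity $W$, rather than the signed sum of weights, enters the estimate; this is the concession one makes for a rule with mixed-sign weights.

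Adding the two estimates produces the common factor $\left(\frac{W}{\mu(S_n)}+1\right)\mu(\Omega)$ in front of $\|u-u^*\|_{L_\infty(\Omega)}$, and inserting the Bramble--Hilbert bound for that norm gives the claimed inequality. The main obstacle is purely the bookkeeping in the weight-transfer step: one must verify that ``transferring to the standard simplex'' scales every weight by precisely $\mu(\Omega)/\mu(S_n)$ (so that $\sum_k|\tilde w_k|=\frac{\mu(\Omega)}{\mu(S_n)}W$), and that the nodes $\tilde x_k$ remain in $\Omega$ so the pointwise values are controlled by $\|u-u^*\|_{L_\infty(\Omega)}$. I would also record that the statement implicitly treats $\Omega$ as a single simplex; for a general domain the estimate is obtained by summing the per-simplex bounds over a triangulation.
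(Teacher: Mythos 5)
Your proof is correct and follows essentially the same route as the paper: decompose $u=u^*+R$ via Bramble--Hilbert, use exactness of $Q$ on degree-$m$ polynomials to reduce the error to the remainder $R$, and bound $|Q(R)|$ and $\left|\int_\Omega R\right|$ separately to obtain the factor $\left(\frac{W}{\mu(S_n)}+1\right)\mu(\Omega)$. Your explicit verification that the affine transfer scales each weight by $\mu(\Omega)/\mu(S_n)$ (so that $\sum_k|\tilde w_k|=\frac{\mu(\Omega)}{\mu(S_n)}W$) is a bookkeeping step the paper leaves implicit, but the argument is the same.
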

\begin{proof}
From Bramble-Hilbert we have that 
$$u=u^*+R$$
where $u^*$ is a polynomial of degree $m$ and $\|R\|_{L_\infty(\Omega)}\leq C_{m,n,\gamma}d^m \|u\|_{W_\infty^m(\Omega)}$
so,
\begin{align*}
    \left|Q(u)-\int_\Omega u \right|&=\left|Q(u^*+R)-\int_\Omega u^*+R \right|=\left|Q(R)-\int_\Omega R\right|\\&\leq\left|Q(R)\right|+\left|\int_\Omega R\right|\leq\left(W\frac{\mu(\Omega)}{\mu(S_n)}+\mu(\Omega)\right)\|R\|_{L_\infty(\Omega)}\\&\leq\left(n!W+1\right)\mu(\Omega)C_{m,n,\gamma}d^m \|u\|_{W_\infty^m(\Omega)} \, .
\end{align*}
The proof is complete.
\end{proof}

In the context of this theorem we can analyze the Grundmann and M\"{o}ller method. For methods with positive weights like tensor product methods, the Lebesgue Constant $\left(n!W+1\right)$ is always equal to $2$ where for Grundmann and M\"{o}ller methods the constant is \cite[Theorem 4.7]{GrundmannMoller}
$$1+n!\sum_{i=0}^\frac{d-1}{2}\frac{2^{1-d}(d-2i+n)^d}{i!(d-i+n)!}\binom{n+\frac{d-1}{2}-i}{n}\in \frac{1}{(\frac{d-1}{2})!}\left(\frac{n}{2}\right)^\frac{d-1}{2}+O\left(n^\frac{d-3}{2}\right)$$
This is summarized in table \ref{tab:ErrorFactor} where we can see that for reasonable degrees and relatively high dimensions we lose less than $2$ digits to quadrature error. 
\begin{table}
    \centering
    \begin{tabular}{|c|c|c|c|} \hline 
         &  $d=3$&  $d=5$&  $d=7$\\ \hline 
         $n=1$&  $2.67$&  $4.13$&  $7.21$\\ \hline 
         $n=2$&  $3.13$&  $5.26$&  $9.69$\\ \hline 
         $n=3$&  $3.60$&  $6.63$&  $12.94$\\ \hline 
         $n=4$&  $4.08$&  $8.25$&  $17.07$\\ \hline 
         $n=5$&  $4.57$&  $10.13$&  $22.21$\\\hline
 $n=6$& $5.06$& $12.25$&$28.48$\\\hline
 $n=7$& $5.56$& $14.63$&$36.01$\\\hline
 $n=8$& $6.05$& $17.25$&$44.91$\\\hline
 $n=9$& $6.55$& $20.13$&$55.31$\\\hline
 $n=10$& $7.04$& $23.25$&$67.34$\\\hline
 & $\frac{n}{2}+O(1)$& $\frac{n^2}{8}+O(n)$&$\frac{n^3}{48}+O(n^{2})$\\\hline
    \end{tabular}
    \caption{This table shows the error factor for Grundmann-M\"{o}ller quadrature. Let $\mathrm{ErrorFactor} = n! W + 1$, then     
    $\log_{10}(\frac{\mathrm{Error Factor}}{2})$ is the number of digits lost by using Grundmann-M\"{o}ller quadrature of degree $d$ in $n$ dimensions instead of tensor product quadrature. For instance, for $n=10$ and $d=7$ you lose $\log_{10}\frac{67.34}{2}\approx 1.53$ digits and have to go out to dimension $42$ in order to lose a third digit. For $n=10$ and $d=5$ you lose $\log_{10}\frac{23.25}{2}\approx 1.06$ digits and have to go out to dimension $124$ to lose three digits of accuracy. The bottom row of the table shows the growth rate of the error factor for fixed degree and general dimension.}
    \label{tab:ErrorFactor}
\end{table}

So, for integrands that are exactly polynomial we get an exact quadrature method and for other integrands we can control the error by either the diameter of the element or the degree of the quadrature method. By being careful about our quadrature errors, we find that we can solve high dimensional quadrature problems with well understood accuracy and small numbers of points which allows us to solve problems in much higher dimensions than we could hope to with tensor product methods. This is important to problems with uncertainty, see for instance \cite{GJLord_CEPowell_TShardlow_2014a,HAntil_SDolgov_AOnwunta_2022b}. Being clear about which quadrature methods we are using increases the transparency of the library and produces a clearer academic record.

\FloatBarrier\section{Numerical Experiments} \label{sec6}
In order to demonstrate the usage and general applicability of the tool we will show four examples: a simple Poisson problem, a convection-diffusion problem with impulse data, a non-linear diffusion problem, and Stokes equation for colliding flow. But, before we get to the examples, we need to discuss in general how to solve a variational problem using this tool. Solving a variational problem using this tool can be broken into the following steps:
%\subsubsection*{Define mesh geometry}
\begin{enumerate}
	\item[A.] Define mesh geometry: To define the mesh geometry you need to provide a triangulation of your domain in the format of a structure array with fields:
    \begin{enumerate}    
	\item[a.] $\mathtt{Points}:$ The vertices of the triangulation.
	\item[b.] $\mathtt{ConnectivityList}:$ The list of $n$-dimensional simplices.
	\item[c.] $\mathtt{BoundaryFacets}:$ The list of $n-1$-dimensional facets that make up the boundary of the domain.
	\item[d.] $\mathtt{ActiveBoundary}:$ The list of $n-1$-dimensional facets that make up the Neumann boundary of the domain.
    \end{enumerate}
	\item[B.] Define finite element space: To define a finite element space you need to decide what your trial and test spaces are and what kinds of finite elements you are going to use to form a basis for your discretized trial and test spaces. 
	\item[C.] Define operators: In order to define the appropriate operators, you need to define the binary form, which is a function which takes one finite element basis function and another finite element basis function and returns a scalar, and the linear operator which takes a finite element basis function and returns a scalar.
	
	\item[D.] Prepare and solve system: Once you have the operators you need to either set up the matrix-vector problem $Ax=b$ for linear systems or the root finding problem $F(x)=b$ for non-linear systems. That is, you need to as appropriate provide the vector $b$, and either the matrix $A$, or the function $F$ and its Jacobian. Once you have these you can, for linear systems, use MATLAB's built in linear system solvers, and for non-linear problems, we provide an implementation of Newton's method. 
\end{enumerate}
\FloatBarrier\subsection{Poisson Problem}\label{ex1}
	Consider the standard Poisson problem:
	\begin{align} 
	-\Delta u&=f \textrm{ in } \Omega=(-1,1)\times(-1,1), \label{prob1}\\
	u&=0\textrm{ on }\partial\Omega, \label{prob2}
	\end{align}
	where $f=6(x^2+y^2-2)xy $ and $u=-(x^3-x)(y^3-y)$. We will solve this using a uniform triangular mesh \texttt{TR}. The finite element space is defined in such a way that it enforces the Dirichlet boundary conditions. We take a basis $\{\phi_i\}$, made up of $\mathbf{P}_1$ basis functions, for
	\begin{align*}
	H^1_0:=\{u\in H^1(\Omega)~;~u\textrm{ is continuous, } u=0\textrm{ on }\partial\Omega\textrm{, and } \\ \hspace*{1cm} u\textrm{ is linear on faces of the triangulation}\}
	\end{align*}
	This basis can be used for both the trial and test spaces. We can construct this basis using \texttt{SpaceUtils.Mesh2Basis}.
	The variational form for \eqref{prob1}--\eqref{prob2} is: find $u\in H_0^1$ such that 
	$$\int_\Omega \nabla u\cdot\nabla v=\int_\Omega fv$$
	for all $v\in H_0^1$. We have the bilinear operator
	$$a(u,v)=
	\int_\Omega \nabla u\cdot\nabla v$$
	which we implement as \texttt{FastBinaryOperators.Gradient}  and the linear operator
	$$L(v)=\int_\Omega f v$$ 
	which we implement as \texttt{FastLinearOperators.L2}. Finally, we construct the matrix-vector problem ${\bm A} {\bm u}={\bm b}$ where $A_{ij}=a(\phi_i,\phi_j)$ and $b_i=L(\phi_i)$ and solve it for $\bm u$ using MATLAB's backslash operator.
	\begin{figure}[hbt!]		
		\centering
		\includegraphics[width=0.45\textwidth,trim={0 0 0 1.2cm},clip]{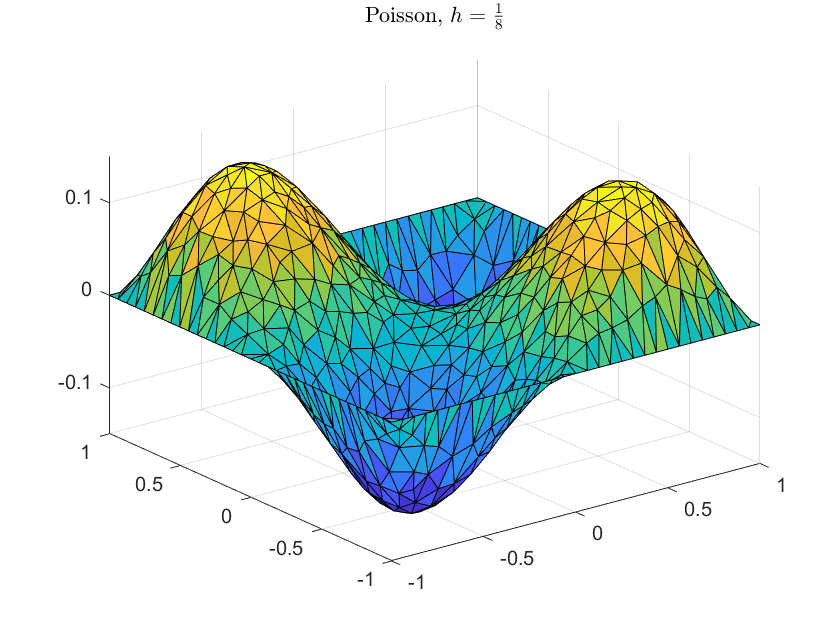}
        \includegraphics[width=0.45\textwidth,trim={0 0 0 1.6cm},clip]{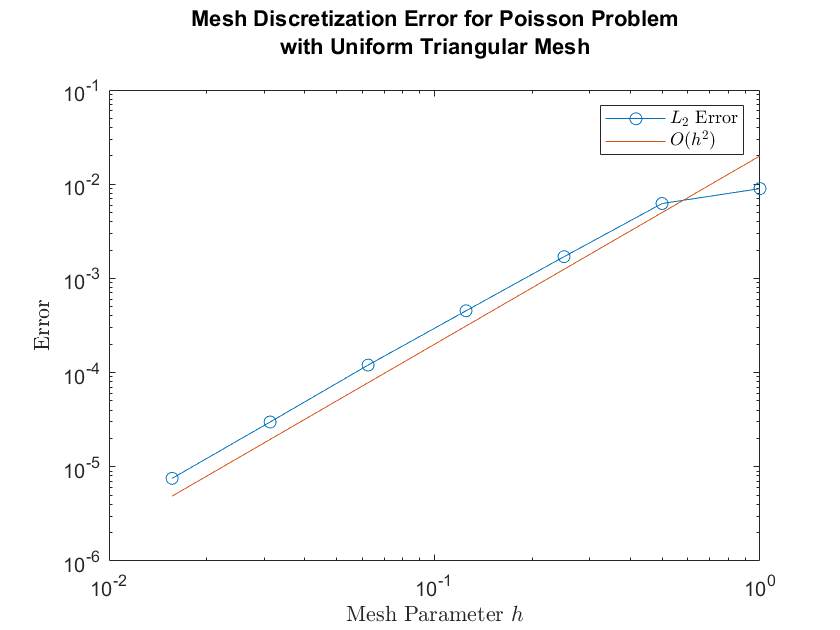}
        \caption{Left panel shows the approximate solution of  Example \ref{ex1} using
  a uniform triangular mesh with maximum side length $h= \frac{1}{8}$.
        The right panel shows $L_2$ error in approximating Example \ref{ex1} using uniform triangular meshes, varying the maximal side length $h$. The expected quadratic rate of convergence is observed. 
        }
		\label{fig:Poisssol}        
	\end{figure}
	%%Add solution image
	
	When solving the Poisson problem this way we expect to see the error estimate
	$$\|u-u_h\|_{L_{2}(\Omega)} \leq C h^2,$$
	where $u_h$ is the discrete solution and $h$ is the longest side length of the triangles in the mesh.
	%%(8.1.12 BS) 
    Figure~\ref{fig:Poisssol} shows the solution and convergence error plots.
    %\todo{The legend in the figure should say $u$ instead of $p$. Also, please use $L_2$ instead of 2 for norm.}
	%The discrete solution $u_h$ is plotted and it is shown in Figure \ref{fig:Poisssol}. Figure \ref{fig:Poiserr} illustrates that the error converges with the optimal rate of $O(h^2)$. 
	% \begin{figure}[hbt!]
	% 	\caption{This figure shows the $L_\infty$ error in approximating Example \ref{ex1} using uniform triangular meshes, varying the maximal side length $h$. We expect our error to scale as $h^2$ and, for an appropriate range of parameters, it does.}
	% 	\centering
	% 	\includegraphics[width=\textwidth,trim={0 0 0 1.6cm},clip]{PoissonError.png}
	% 	\label{fig:Poiserr}
	% \end{figure}

\FloatBarrier\subsection{Convection-Diffusion Problem}\label{ex2}
	In this example, we consider the following problem:
	\begin{align}
	-\varepsilon\Delta u-u_y&=M\thickspace
 \delta_p(x) \textrm{ in } \Omega=(0,1)\times(0,2),\label{prob3}\\
	u&=0\textrm{ on }\Gamma_D:=\{(x,y)\, |\, x=0\textrm{, }x=1\textrm{, or }y=2\},\label{prob4}\\
	u_y&=0\textrm{ on }\Gamma_N:=\{(x,y)\,|\, y=0\},\label{prob5}
	\end{align}
	where $\delta_p$ is the Dirac delta function representing a point source. For our demonstration we have set $M = 1$ and $\varepsilon =1$. We will again solve this using a uniform triangular mesh \texttt{TR}. Next, we define our finite element space again to enforce the Dirichlet boundary conditions. We choose a basis $\{\phi_i\}$ of $\mathbf{P}_1$ functions for
    \begin{align*}
	H_{E_0}^1:=\{u\in H^1(\Omega)~;~u\textrm{ is continuous, } u=0\textrm{ on }\Gamma_D\textrm{, and } \\ \hspace*{1cm} u\textrm{ is linear on faces of the triangulation}\}
	\end{align*}
    This basis is again used for both the trial and test spaces.
	The variational form for \eqref{prob3}--\eqref{prob5} is: find $u\in H_{E_0}^1$ such that 
	$$\int_\Omega (\varepsilon \nabla u\cdot\nabla v-u_yv)=M\thickspace v(p).$$
	for all $v\in H_{E_0}^1$. We have the bilinear operator
	$$a(u,v)=\int_\Omega\varepsilon \nabla u\cdot\nabla v-u_yv$$
	which we implement using
	\texttt{FastBinaryOperators.Generic}
	and the linear operator
	$$L(v)=M\thickspace v(p).$$
	Finally, we construct the matrix-vector problem ${\bm A} {\bm u}=\bm b$ where $A_{ij}=a(\phi_i,\phi_j)$ and $b_i=L(\phi_i)$ and solve it for $\bm u$ using MATLAB's backslash operator. The solution of the discrete problem is shown in Figure \ref{fig:CDSolution}. Notice that the goal here is illustrate that this nonsmooth problem can be solved using our approach. 
 %Because this system is non-smooth, we have neither an exact solution nor an error bound.
	\begin{figure}[hbt!]		
		\centering
		\includegraphics[width=0.45\textwidth,trim={0 0 0 1.2cm},clip]{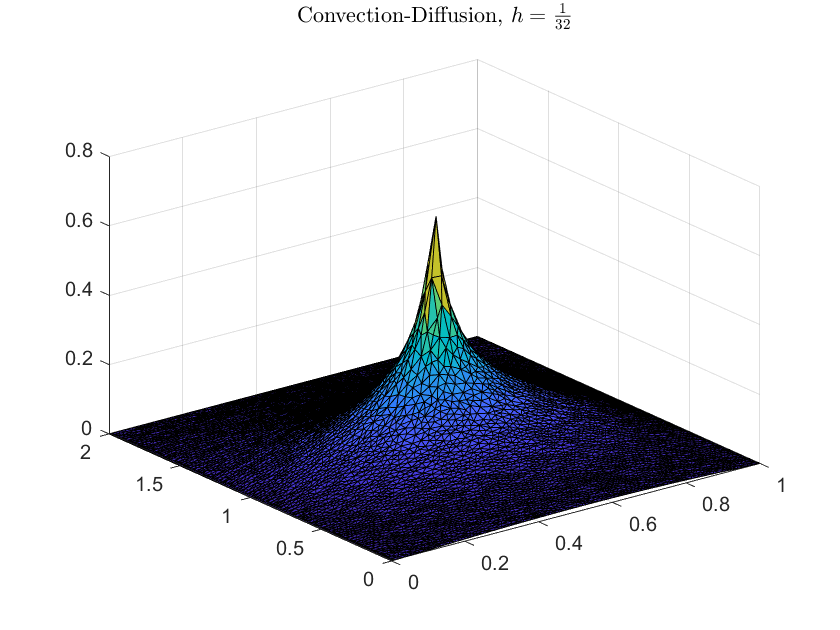}		
  \caption{\label{fig:CDSolution}This figure shows the approximate solution of the convection-diffusion system \ref{ex2} with a unit impulse load located at $(x,y)=(\frac{1}{2},1)$. This is approximated with a uniform triangular mesh whose maximal side length is $h=\frac{1}{32}$.}
	\end{figure}

%%images and discussion
%\FloatBarrier
\subsection{Non-Linear-Diffusion Problem}\label{ex3}
	The framework can also handle non-linear PDEs such as the following:
	\begin{align}
	-u''+u^3&=f:=\pi^2\sin(\pi x)+\sin(\pi x)^3\textrm{ in } \Omega=(0,1),\label{prob6}\\
	u(0)&=u(1)=0,\label{prob7}\end{align}
	which has an exact solution given by 
	$u=\sin(\pi x).$
	We will again solve this using a uniform mesh. 
	We construct our finite element basis $\{\phi_i\}$ of $\mathbf{P}_1$ basis functions satisfying the Dirichlet boundary conditions. This basis is again used for both the trial and test spaces.
	The variational form for \eqref{prob6}--\eqref{prob7} is:
	Find $u\in H_0^1$ such that 
	$$\int_\Omega u' v'+u^3 v=\int_\Omega fv$$
	for all $v\in H_0^1$. We have the binary operator and the linear operator 
	$$a(u,v)=\int_\Omega u' v'+u^3 v \quad \mbox{and} \quad
    L(v) = \int_\Omega fv.$$ 
	%whose Jacobian can be approximated by the Laplacian matrix
	%and the linear operator
	%$$\int_\Omega fv.$$
	Finally, we solve the problem using Newton's method with our approximate Jacobian.
 	\begin{figure}[h!]		
		\centering
		\includegraphics[width=0.45\textwidth,trim={0 0 0 1.2cm},clip]{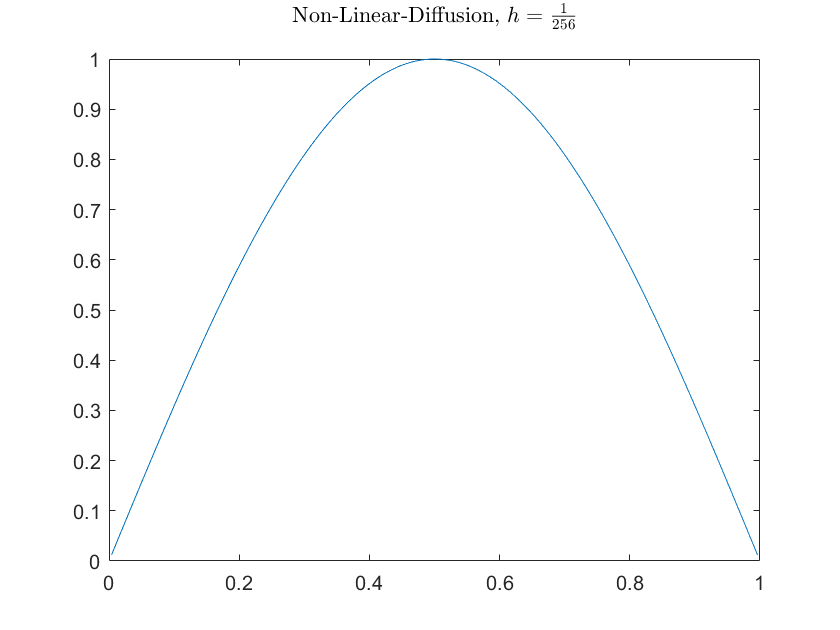}
        \includegraphics[width=0.45\textwidth,trim={0 0 0 1.6cm},clip]{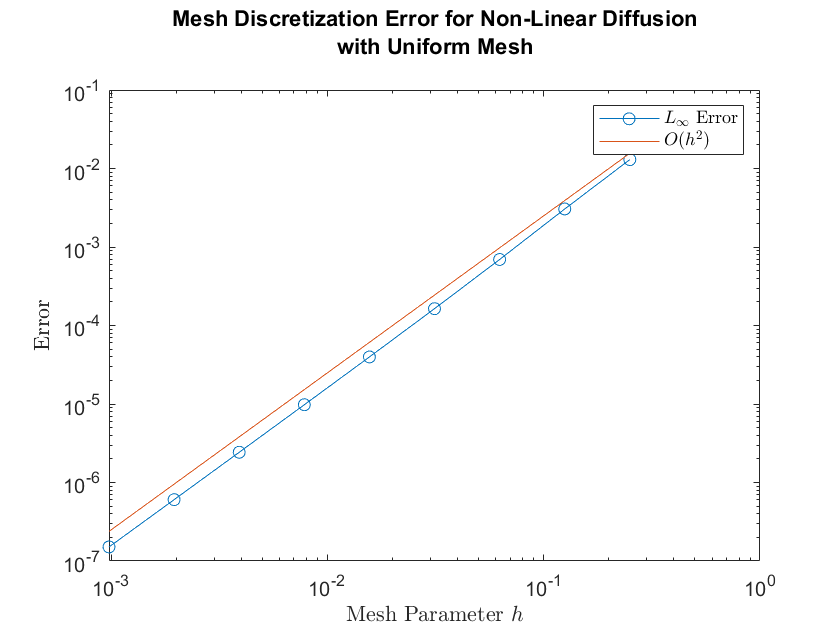}		
        \caption{\label{fig:NLSolution}
        The left panel shows the approximate solution of  Example \ref{ex3} using uniformly spaced nodes with spacing $h= \frac{1}{256}$
        and the right panel shows the quadratic rate of convergence.}
	\end{figure}
 When solving a non-linear-diffusion problem this way we expect to see the error estimate
	$$\|u-u_h\|_{L_\infty(\Omega)} \leq C h^2$$
 where $u_h$ is the discrete solution and $h$ is the spacing between nodes in the mesh. The discrete solution $u_h$ and the error convergence are shown in Figure \ref{fig:NLSolution}. 
 %Figure \ref{fig:NLError} shows that the error converges with the expected rate $O(h^2)$.
	% \begin{figure}[hbt!]
	% 	\caption{This figure shows the $L_\infty$ error in approximating Example \ref{ex3} using uniformly spaced nodes, varying the node spacing $h$. We expect our error to scale as $h^2$ and, for an appropriate range of parameters, it does.}
	% 	\centering
	% 	\includegraphics[width=\textwidth,trim={0 0 0 1.6cm},clip]{NonLinearError.png}
	% 	\label{fig:NLError}
	% \end{figure}
%\FloatBarrier
\subsection{Stokes Problem} \label{ex4}
	Next, we consider the Stokes equation for colliding flow:
	\begin{align*}
	-\Delta \Vec{u}+\nabla p&=0 \textrm{ in } \Omega=(-1,1)\times(-1,1)\\
	\nabla\cdot \Vec{u}&=0 \textrm{ in } \Omega\\   
	\Vec{u}&=\binom{20xy^3}{5x^4-5y^4}\textrm{ on }\partial\Omega
	\end{align*}
	whose solution for pressure is
	$$p=60x^2y+20y^3+C.$$
	We will solve this using a uniform triangular mesh \texttt{TR}. 
 %The velocity finite element space imposes the Dirichlet boundary conditions. 
	%
	For pressure, we construct a basis $\{\psi_i\}$, made up of $\mathbf{P}_1$ basis functions, for
	$$\mathbf{L}_2:=\{u\in L_2(\Omega)\, |\, u\textrm{ is continuous, and } u\textrm{ is linear on faces of the triangulation}\}$$
	This basis will be used as both the trial and test space for the pressure component. 
    For velocity, we construct a basis $\{\phi_i\}$, made up of $\mathbf{P}_2$ basis functions
	$$\mathbf{H}^1_{E_0}:=\{u\in H^1(\Omega)\, |\, u\textrm{ is continuous, } u=\Vec{0}\textrm{ on }\partial\Omega\textrm{, and } u\textrm{ is quadratic on faces of the  triangulation}\}$$
	and we use $\{(\phi_1,0),\cdots,(\phi_n,0),(0,\phi_1),\cdots,(0,\phi_n)\}\in\vec{\mathbf{H}}^1_{E_0}$ as the basis for both the trial and test space for the velocity component.
	
The variational form for this problem is: find $\vec{u}\in\vec{\mathbf{H}}^1_{E_0}$ and $p\in\mathbf{L}_2$ such that
 \begin{align*}
     \int_\Omega\nabla \vec{u}:\nabla \vec{v}-\int_\Omega p\nabla\cdot\vec{v}&=0\textrm{ for all }\vec{v}\in\vec{\mathbf{H}}^1_{E_0},\\
     \int_\Omega q\nabla\cdot\vec{u}&=0\textrm{ for all }q\in\mathbf{L}_2.
 \end{align*}
 
This, along with our velocity and pressure bases, leads to the matrix vector equation 
$$\begin{bmatrix}\bm A&\bm B^\top\\ \bm B&\bm 0\end{bmatrix}\begin{bmatrix}\bm u\\ \bm p\end{bmatrix}=\begin{bmatrix}\bm f\\ \bm g\end{bmatrix}.$$
%$complicated math$ where $MATH$. 
This system is solved using MATLAB backslash operator. Some results are shown in 
Figure~\ref{fig:StokesSolution}.

%%%Add analysis
	\begin{figure}[hbt!]		
		\centering
		\includegraphics[width=0.45\textwidth,trim={0 0 0 1.6cm},clip]{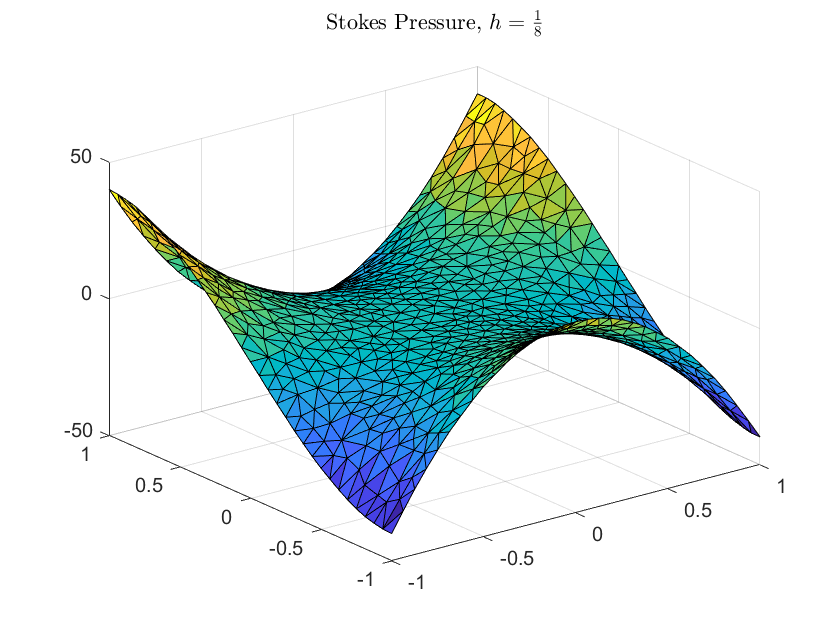}
        \includegraphics[width=0.45\textwidth,trim={0 0 0 1.6cm},clip]{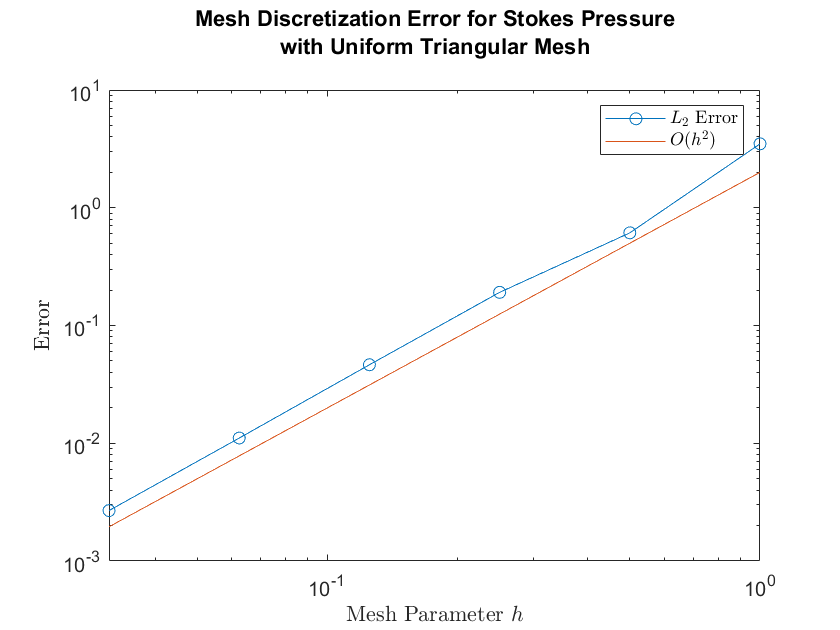}		
        \caption{\label{fig:StokesSolution}The left panel shows the approximate solution for pressure of Example \ref{ex4} using a uniform triangular mesh with maximum side length $h= \frac{1}{8}$. The right panel shows the quadratic rate of convergence for pressure.}
	\end{figure}
	%%Add solution image
	
	% When solving the Stokes problem this way we expect to see the error estimate
	% $$MATH$$
	% where $MATH$ 
	
	% Note that the figure below shows that we obtain this result. 
	% \begin{figure}[hbt!]
	% 	\caption{This figure shows the $L_2$ error in approximating Example \ref{ex4} using uniform triangular meshes, varying the maximal side length $h$. We expect our error to scale as $h^2$ which it does.}
	% 	\centering
	% 	\includegraphics[width=\textwidth,trim={0 0 0 1.6cm},clip]{StokesError.png}
	% 	\label{fig:StokesError}
	% \end{figure}
%%images and discussion
\FloatBarrier\printbibliography

\end{document}